\theoremstyle{plain}
\newtheorem{theorem}{Theorem}
\newtheorem{corollary}{Corollary}
\newtheorem*{corollary*}{��������}
\theoremstyle{definition}
\newtheorem{definition}{Definition}
\theoremstyle{remark}
\newtheorem*{remark*}{Remark}
\begin{document}

\title[On faithfulness, DP-transformations and Cantor series expansions]
{On faithfulness and DP-transformations generated by arithmetic Cantor series expansions}

\author[G. Torbin, Yu. Voloshyn  ]{ Grygoriy Torbin $^{1,2}$, Yuliia Voloshyn $^{3}$} 
\date{}

\maketitle
\begin{abstract}
The paper is devoted to the study of conditions for the Hausdorff-Besicovitch faithfulness of the family of cylinders generated by Cantor series expansions. 
We show that there exist subgeometric Cantor series expansions for which the corresponding families of cylinders are not faithful for the Hausdorff-Besicovitch dimension on the unit interval.  On the other hand we found a rather
 wide subfamily of subgeometric Cantor series expansions  generating faithful families of cylinders.  

We also study conditions  for the Hausdorff-Besicovitch dimension   preservation on [0;1] by probability distribution functions of random variables with independent symbols of arithmetic Cantor series expansions. 

\end{abstract}
$^{1}$~ Dragomanov Ukrainian State University, Pyrogova str. 9, 01601 Kyiv
(Ukraine) 
$^{2}$ Institute for Mathematics of NASU, Tereshchenkivs'ka str. 3, 01601 Kyiv (Ukraine); E-mail:
g.m.torbin@udu.edu.ua (corresponding author)

$^{3}$~ Dragomanov Ukrainian State University, Pyrogova str. 9, 01601 Kyiv
(Ukraine)
\medskip

\textbf{AMS Subject Classifications (2020): 11K55, 28A80, 60G30.}\medskip

\textbf{Key words:} Fractals, Hausdorff-Besicovitch dimension, DP-transformation, locally fine covering family, faithful covering families, Cantor series expansion, random variable with independent symbols of Cantor expansion,    singular probability measures.

\section{Introduction}

The notion of Hausdorff-Besicovitch dimension is widely known and plays an important role in both mathematics and applied research (see, e.g., \cite{BGZ, Falconer 1990, Mandelbrot 1983, Pesin 1997, Triebel 1997}). However, its calculation or even estimation is a rather non-trivial problem (\cite{ABPT, AKNT, Baranski 2007, Billingsley 1961, Falconer 1990, NT_TVIMS12, Pratsiovytyi 1998}).

Various approaches and special techniques for computing the Hausdorff-Besicovitch dimension are described in detail in \cite{Falconer 1990, Falconer 1997, Mandelbrot 1983}. In particular, an approach based on the theory of DP-transformations was presented in \cite{APT1, APT2} and developed in \cite{AGIT, GNT, IT 2}.

\begin{definition}
A bijective function $f(x)$: $[0;1]\rightarrow[0;1]$ is called a DP-transformation on $[0;1]$ if 
$$
 \forall E \subset  [0;1]: \quad \dim_{H}(E)=\dim_{H}(f(E)).
$$
\end{definition}

Later in the work \cite{AILT}, an alternative approach was presented that is closely related to the theory of DP-transformations and is also based on the notion of faithful families of coverings, which significantly simplifies the calculation of the Hausdorff-Besicovitch dimension for a given set.

The study of DP-transformations is important for two main reasons \cite{APT1}:
if a DP-transformation maps a set $E$ to a set $E'$ and preserves the Hausdorff-Besicovitch dimension, then it suffies to calculate the dimension of a simpler set.
Fractal geometry can be considered as the study of invariants of the group of DP-transformations of a space. So, fractal geometry can be considered as  a generalization of affine geometry, as the latter discipline investigates the invariants of the affine transformation group, which forms a subgroup of the group of DP-transformations.

This work is devoted to the study of properties of random variables with independent symbols of Cantor arithmetic expansions and their applications in the theory of DP-transformations.
 
In Section 2 we show that for the subgeometric case (i.e., if the basic sequence $\{n_k\}$ grows at most geometrically), the corresponding family of cylinders can be non-faithful. We also prove sufficent conditions for the  family of cylinders generated by subgeometric  Cantor expansions to be faithful. In particular, if the basic sequence $\{n_k\}$ satisfies the following conditions 
$$
\text{where } \quad a_k \leq n_k \leq b_k, \quad \forall k \in N,
$$
with  $\{a_k\}$ being an arithmetic progression such that  $a_1 \geq 2$ and $d \geq 1,$

and $\{b_k\}$ being a geometric progression such that $b_1 \geq 2$ and $q \geq 1,$
then the corresponding family of cylinders as faithful. 

Section 3 is devoted to the study of properties of random variables with independent symbols of Cantor expansions. Specifically, it provides necessary and sufficient conditions for the probability  distribution functions of  random variables with independent symbols of Cantor expansions to be in to the DP-class, under the condition that $\{n_k\}$ is bounded and probabilities $p_{ik}$ are separated from zero.

Section 4 shows that the well-known necessary conditions for the distribution function of a random variable with independent Cantor symbols to belong to DP-transformations are not sufficient, where the sequence $\{n_k\}$ forms an arithmetic progression.

\section{ On some faithful families of coverings for the Hausdorff-Besicovich dimension calculation generated by Cantor series expansions}

Let $E \subset  [0;1]$ and let $\Phi$  be some family of subsets from this segment.

\begin{definition}
A family $\Phi$ of subsets of $[0;1]$ is said to be locally fine if for any $E\subset [0;1] $ there exists an at most countable $\varepsilon$-covering $\{E_j\}$ of $E$ , $E_j\in \Phi$.
\end{definition}

Recall that the Hausdorff-Besicovitch dimension of a set $E\subset [0;1]$ with respect to $\Phi$ is the number
$$
\dim_{H}(E,\Phi)=\inf\{\alpha:H^{\alpha}(E,\Phi)=0\},
$$

where $H^{\alpha}(E,\Phi):=\lim\limits_{\varepsilon\to0}H^{\alpha}_{\varepsilon} (E,\Phi)=\lim\limits_{\varepsilon\to 0}(\inf\limits_{|E_{k}|\le\varepsilon}(\sum_{k}|E_{k}|^\alpha)$,
\\
where infimum is taken over all possible $\varepsilon$-covering of E by subsets $E_k$ from $\Phi$. 

\begin{definition}
A locally fine covering family $\Phi$ is said to be faithful for the calculation of the Hausdorff-Besicovitch dimension on $[0;1]$ , if
$$
\dim_{H}(E)=\dim_{H}(E,\Phi) , \quad  \forall E \subset  [0;1].
$$
\end{definition}

The problem on necessary and sufficient conditions for the faithfulness certain locally fine systems of coverings has been the subject of research by many scientists \cite{AT2, Besicovitch 1952, Culter 1988, TP}. In particular, an important contribution was made by A. S. Besicovitch, who first proved the faithfulness  of systems of cylinders of binary expansion \cite{Besicovitch 1952} . Later, the faithfulness  of various covering systems was studied by:
Patrick Billingsley (for families of $s$-adic cylinders \cite{Billingsley 1961});
Mykola Pratsiovytyi (for families of $Q$-cylinders \cite{TP});
S. Albeverio, M. Pratsiovytyi, G. Torbin, M. Ibrahim, V. Vasylenko (for families of cylinders of $Q^*$-expansion \cite{IT 1, VVPT});
S. Albeverio, Y. Kondratiev, R. Nikiforov, O. Smiyan, G. Torbin (for families of cylinders of $Q_\infty$-expansion \cite{AKNT});
G. Torbin, V. Vasylenko (for families of cylinders of $\tilde{Q}$-expansion \cite{VMT}).
Necessary and sufficient conditions for the family of Cantor series cylinders to be faithful were found in \cite{AILT}.
\begin{theorem}
The family $\Phi(C)$ of cylinders of the Cantor expansion is  faithful of the Hausdorff-Besicovitch dimension calculation on $[0; 1]$ if and only if the following condition holds:
$$
\lim_{k\to\infty} \frac{\ln(n_k)}{\ln(n_1\cdot n_2 \cdot ... \cdot n_{k-1} )}=0
$$
\end{theorem}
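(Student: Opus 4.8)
The plan is to deduce both implications from the general fact that $\dim_{H}(E)\le\dim_{H}(E,\Phi(C))$ for every $E\subset[0;1]$ (any $\Phi(C)$-covering is in particular a covering), so that only the reverse inequality is at issue. Write $q_k=(n_1 n_2\cdots n_k)^{-1}$ for the common length of a cylinder of rank $k$. It is convenient to record first that the condition in the statement is equivalent to $\dfrac{\ln q_k}{\ln q_{k-1}}\to 1$, and also to the requirement that for every $\beta\in(0;1)$ and every $\gamma>0$
\begin{equation*}
C_{0}(\beta,\gamma):=\sup_{k\ge 1}\frac{n_k^{\,1-\beta}}{(n_1 n_2\cdots n_{k-1})^{\gamma}}<\infty .
\end{equation*}

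For sufficiency I would fix $E$, a number $\alpha>\dim_{H}(E)$ (assume $\alpha<1$, otherwise there is nothing to prove) and $\alpha'\in(\alpha;1)$, and show $H^{\alpha'}(E,\Phi(C))=0$. Take any $\varepsilon$-covering $\{I_j\}$ of $E$ with $\sum_j|I_j|^{\alpha}$ as small as we like; for each $j$ choose the rank $r_j$ with $q_{r_j}\le|I_j|<q_{r_j-1}$ and replace $I_j$ by the at most $|I_j|/q_{r_j}+2$ consecutive cylinders of rank $r_j$ meeting it. This produces an $\varepsilon$-covering of $E$ by cylinders with
\begin{equation*}
\sum|\Delta|^{\alpha'}\le 3\sum_j|I_j|\,q_{r_j}^{\,\alpha'-1}\le 3\sum_j|I_j|^{\alpha}\cdot\frac{n_{r_j}^{\,1-\alpha'}}{(n_1 n_2\cdots n_{r_j-1})^{\alpha'-\alpha}}\le 3\,C_{0}(\alpha',\alpha'-\alpha)\sum_j|I_j|^{\alpha},
\end{equation*}
the middle step using $q_{r_j}\le|I_j|<q_{r_j-1}$ and $\alpha'<1$. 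Hence $H^{\alpha'}(E,\Phi(C))=0$, and letting $\alpha'\downarrow\alpha\downarrow\dim_{H}(E)$ gives $\dim_{H}(E,\Phi(C))\le\dim_{H}(E)$, i.e.\ faithfulness.

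For necessity, suppose the limit is not $0$: there are $c>0$ and infinitely many ranks $k$ with $n_k\ge(n_1 n_2\cdots n_{k-1})^{c}$, hence with a super-geometric drop $q_k\le q_{k-1}^{\,1+c}$. I would pick a sufficiently sparse subsequence $m_1<m_2<\cdots$ of such ranks, a parameter $\theta\in(0;1)$, integers $t_l\sim n_{m_l}^{\theta}$, and set (writing $\varepsilon_k(x)$ for the $k$-th symbol in the Cantor expansion of $x$)
\[
E=\bigl\{x\in[0;1]:\ \varepsilon_{m_l}(x)\in\{0,1,\dots,t_l-1\}\ \text{ for all }l\bigr\}.
\]
This is a homogeneous Cantor-type set whose $l$-th level pieces are intervals of length $L_l=t_l q_{m_l}$, and $L_l$ lies strictly between the cylinder scales $q_{m_l}$ and $q_{m_l-1}$: covering $E$ by these intermediate intervals is efficient, so the classical dimension formula for homogeneous Cantor sets bounds $\dim_{H}(E)$ from above by some $d_2$; on the other hand the uniform (Cantor) measure $\mu$ on $E$ satisfies $\mu(\Delta)\le C|\Delta|^{d_1}$ for every cylinder $\Delta$ — the extremal cylinders being those of ranks $m_l$ — so the mass distribution principle, applied to $\Phi(C)$-coverings, gives $\dim_{H}(E,\Phi(C))\ge d_1$. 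It then remains to choose $\theta$ and the lacunarity of $\{m_l\}$ so that $d_1>d_2$, and $\Phi(C)$ is not faithful.

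The hard part is the necessity direction, and within it the bookkeeping that simultaneously keeps $d_1>0$ and $d_1>d_2$: one has to balance the gain $\ln t_l$ coming from the retained digits against the ``waste'' $\ln n_{m_l}$ at the bad ranks, and amortise the latter over the many free ranks between $m_{l-1}$ and $m_l$ — this is precisely why $\{m_l\}$ must be thinned out, and why a single explicit $\theta$ (for instance $t_l\sim n_{m_l}^{1/2}$, the ``geometric-mean'' scale between $q_{m_l}$ and $q_{m_l-1}$) should already suffice. By contrast the sufficiency direction is essentially the single displayed estimate above together with the elementary equivalence recorded in the first paragraph.
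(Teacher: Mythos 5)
A preliminary remark on the comparison itself: the paper does not prove this theorem — it is quoted as a known criterion from \cite{AILT} — so there is no internal proof to measure your argument against, and I assess your proposal on its own merits.

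Your sufficiency half is correct and essentially complete. For $q_{r_j}\le |I_j|<q_{r_j-1}$ the replacement of $I_j$ by at most $|I_j|/q_{r_j}+2\le 3|I_j|/q_{r_j}$ cylinders of rank $r_j$, together with the identity $q_{r_j-1}^{1-\alpha}q_{r_j}^{\alpha'-1}=n_{r_j}^{1-\alpha'}(n_1\cdots n_{r_j-1})^{-(\alpha'-\alpha)}$, does give your displayed bound, and the implication needed from your first paragraph (limit $=0$ implies $C_0(\beta,\gamma)<\infty$ for all $\beta\in(0;1)$, $\gamma>0$) is elementary because the limit condition gives $n_k\le (n_1\cdots n_{k-1})^{\delta}$ eventually, for every $\delta>0$. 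Combined with the trivial inequality $\dim_{H}(E)\le\dim_{H}(E,\Phi(C))$ and with $\dim_{H}(E,\Phi(C))\le 1$, this settles the ``if'' direction.

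The ``only if'' direction, as written, is a plan rather than a proof: the construction (restrict the digit at a sparse subsequence $m_l$ of bad ranks to $t_l\sim n_{m_l}^{\theta}$ values), the covering of $E$ by the intermediate intervals of length $t_l q_{m_l}$, and the mass-distribution estimate over cylinders are exactly the right ingredients, but the decisive inequality $d_1>d_2$ — the one place where the argument could conceivably fail — is left unverified, and you do not even write $d_1,d_2$ explicitly. For the record, it does close, for any fixed $\theta\in(0;1)$: put $S_l:=\frac{\ln n_{m_l}}{\ln(n_1\cdots n_{m_l-1})}\ge c$ and choose $m_l$ so sparse that $\sum_{j<l}\ln n_{m_j}\le 2^{-l}\ln(n_1\cdots n_{m_l-1})$. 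Then the binding cylinder ranks for the uniform measure $\mu$ on $E$ are $k=m_l$, and $\mu(\Delta)\le C|\Delta|^{d_1}$ holds for every cylinder $\Delta$ whenever $d_1<\liminf_l\frac{1+\theta S_l}{1+S_l}$, so $\dim_{H}(E,\Phi(C))\ge \liminf_l\frac{1+\theta S_l}{1+S_l}$; on the other hand the covering by the level-$l$ pieces gives $\dim_{H}(E)\le\liminf_l\frac{1}{1+(1-\theta)S_l}$. Since $(1+\theta S)(1+(1-\theta)S)=1+S+\theta(1-\theta)S^2>1+S$, one has $\frac{1+\theta S}{1+S}>\frac{1}{1+(1-\theta)S}$ for all $S>0$; both functions are decreasing in $S$, so the two liminfs are governed by $\limsup_l S_l$, and the strict gap survives even when $\limsup_l S_l=\infty$ because the first expression stays $\ge\theta>0$ while the second tends to $0$. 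Hence $\dim_{H}(E,\Phi(C))>\dim_{H}(E)$ and $\Phi(C)$ is not faithful. So your route is sound and completable (and $\theta=1/2$ indeed suffices), but as submitted only the sufficiency direction is actually proved; the necessity direction needs the above bookkeeping written out, since it is precisely there that the hypothesis ``$\limsup>0$'' and the sparsification are used.
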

By using this result it if easy to produce examples of faithful as well as non-faithful families $\Phi(C)$ of Cantor expansion cylinders:

a) if $n_k =2^{2^k}$, then the family $\Phi(C)$ is non-faithful;

b) if $\{n_k\}$ is bounded, then the family $\Phi(C)$ is faithful.

During several years the following conjecture was dominated:  if the sequence $\{n_k\}$ is subgeometric (i.e., there exists a positive integer $q$ such that $n_k \le q^k, \forall k\in N$), then the family $\Phi(C)$ is faithful.

Unfortunately this conjecture fails to be true. The simplest counterexample can be produced as follows:
let
$$
n_k = \begin{cases}
2, \ \text{for} \ k\neq 10^s ;
\\
10^k, \ \text{for} \ k=10^s, \  s \in N.
\end{cases}
$$

In such a case $\{n_k\}$ is subgeometric ( $n_k \le 10^k$), but the limit $\lim\limits_{k\to\infty} \frac{\ln(n_k)}{\ln(n_1\cdot n_2 \cdot ... \cdot n_{k-1} )}$ does not equal 0.

The following theorem gives sufficient conditions for subgeometric families of Cantor series cylinders to be faithful.

\begin{theorem}
 Let the basic sequence $\{n_k\}$ satisfies the following cindition:
$$
a_k \le n_k \le b_k ,\quad  \forall k \in N ,
$$
where $\{a_k\}$ forms an arithmetic progression with $a_1\ge2 , d\ge1$,

and $\{b_k\}$ forms a geometric progression with $b_1\ge2 , q\ge1$, 

then the family $\Phi(C)$ of Cantor series cylinders is faithful for the Hausdorff-Besicovitch dimension calculation on $[0;1]$.
\end{theorem}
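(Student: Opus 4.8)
\smallskip\noindent\textbf{Proof sketch.}
The plan is to verify the criterion of Theorem~1 directly, estimating its numerator and denominator separately by means of the two-sided bound $a_k\le n_k\le b_k$. Writing the progressions explicitly as $a_k=a_1+(k-1)d$ and $b_k=b_1q^{\,k-1}$, the upper bound immediately yields
$$\ln n_k\ \le\ \ln b_k\ =\ \ln b_1+(k-1)\ln q ,$$
so the numerator in the criterion grows at most linearly in $k$; in particular this already confirms that $\{n_k\}$ is subgeometric, since $n_k\le\bigl(\max\{b_1,q\}\bigr)^{k}$.

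For the denominator I would use the lower bound $n_j\ge a_j=a_1+(j-1)d\ge j+1$, which holds because $a_1\ge 2$ and $d\ge 1$. Consequently
$$\ln\bigl(n_1 n_2\cdots n_{k-1}\bigr)\ =\ \sum_{j=1}^{k-1}\ln n_j\ \ge\ \sum_{j=1}^{k-1}\ln(j+1)\ =\ \ln(k!),$$
and by the elementary estimate $\ln(k!)\ge\int_{1}^{k}\ln t\,dt=k\ln k-k+1$ (or by Stirling's formula) the denominator is of order at least $k\ln k$, i.e.\ of strictly larger order than the numerator. Combining the two bounds,
$$0\ \le\ \frac{\ln n_k}{\ln(n_1\cdot n_2\cdots n_{k-1})}\ \le\ \frac{\ln b_1+(k-1)\ln q}{k\ln k-k+1}\ \longrightarrow\ 0\qquad(k\to\infty),$$
so the hypothesis of Theorem~1 is satisfied and the family $\Phi(C)$ of Cantor series cylinders is faithful for the Hausdorff-Besicovitch dimension calculation on $[0;1]$.

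The whole argument is just a comparison of growth rates, so I do not expect a serious obstacle; the only point requiring a little care is to make sure that $\ln(n_1\cdot n_2\cdots n_{k-1})$ outgrows every linear function of $k$, which is exactly what the arithmetic lower bound $a_k\ge k+1$ supplies through the factorial $k!$. The degenerate cases are harmless: if $q=1$ then $\{n_k\}$ is bounded and faithfulness already follows from item~(b) above, while larger values of $d$ or $q$ only reinforce the inequalities used. (For the hypotheses to be non-vacuous one of course also assumes $a_k\le b_k$ for every $k$.)
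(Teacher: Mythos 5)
Your proposal is correct and follows essentially the same route as the paper: bound the numerator of the criterion from Theorem~1 by the geometric progression, $\ln n_k\le\ln b_1+(k-1)\ln q$, and the denominator from below by $\ln(k!)$ via $n_j\ge a_j\ge j+1$, so the ratio tends to $0$; the only cosmetic difference is that you use the elementary integral estimate $\ln(k!)\ge k\ln k-k+1$ where the paper invokes Stirling's formula with an explicit error term.
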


\begin{proof}
Consider the expression
$$
\frac{\ln(n_k)}{\ln(n_1\cdot n_2 \cdot ... \cdot n_{k-1} )} \le \frac{\ln(b_k)}{\ln(a_1\cdot a_2 \cdot ... \cdot a_{k-1} )}=
$$
$$
=\frac{\ln(b_1 q^{k-1})}{\ln(a_1\cdot (a_1 + d)\cdot ... \cdot (a_1 + (k-2)d)} \le \frac{\ln(b_1) + (k-1)\ln(q)}{\ln(2\cdot(2+1)\cdot...\cdot(k-2))} =
$$
$$
 =\frac{\ln(b_1) + (k-1)\ln(q)}{\ln(k-2)!} \le  \frac{\ln(b_1) + (k-1)\ln(q)}{\ln(\sqrt{2\pi(k-2)}\cdot (\frac{k-2}{e})^{k-2}\cdot e^{\theta_{k-2}})},
$$
where $|\theta_{k-2}|\le \frac{1}{12(k-2)}$.

So,
$$
\frac{\ln(n_k)}{\ln(n_1\cdot n_2 \cdot ... \cdot n_{k-1} )} \le \frac{\ln(b_1) + (k-1)\ln(q)}{\ln(\sqrt{2\pi(k-2)}+ (k-2)\ln(\frac{k-2}{e}) + \theta_{k-2})} \to 0 \ ( \text{as} \ k \to \infty)
$$

Therefore,
$$
 \lim_{k\to\infty} \frac{\ln(n_k)}{\ln(n_1\cdot n_2 \cdot ... \cdot n_{k-1} )}=0
$$ 
Taking into account results from \cite{AILT}, we get the faithfulness of $\Phi(C)$.
\end{proof}

\begin{corollary}
If $\{n_k \}$ is strictly increasing and subgeometric (i.e., there exists a constant $q$ such that $n_k \le q^k , \forall k \in N $), then $\Phi(C)$ is faithful for the Hausdorff-Besicovitch dimension calculation on $[0;1]$.
\end{corollary}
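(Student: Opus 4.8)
The plan is to deduce this directly from Theorem~2 by exhibiting the required arithmetic lower bound and geometric upper bound for the basic sequence $\{n_k\}$. Since $\{n_k\}$ is the basic sequence of a Cantor series expansion we have $n_k \ge 2$ for every $k$, and since it is strictly increasing and integer-valued, a one-line induction ($n_1 \ge 2$, and $n_{k+1} > n_k$ forces $n_{k+1} \ge n_k + 1$) gives $n_k \ge n_1 + (k-1) \ge k+1$ for all $k$. Thus, putting $a_k := k+1$, we obtain an arithmetic progression with $a_1 = 2 \ge 2$ and common difference $d = 1 \ge 1$ satisfying $a_k \le n_k$ for all $k \in N$.

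For the upper bound, set $b_k := q^k$, where $q$ is the constant from the subgeometric hypothesis. This is a geometric progression with ratio $q \ge 1$; moreover, applying the inequality $n_k \le q^k$ at $k=1$ yields $q \ge n_1 \ge 2$, so $b_1 = q \ge 2$. By assumption $n_k \le q^k = b_k$ for every $k$. Hence the pair $\{a_k\}$, $\{b_k\}$ verifies all the hypotheses of Theorem~2, and the faithfulness of $\Phi(C)$ for the Hausdorff--Besicovitch dimension calculation on $[0;1]$ follows at once.

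There is essentially no obstacle here; the only point requiring a moment of care is noting that the constant $q$ appearing in the subgeometric condition is automatically at least $2$ (so that the geometric progression $\{q^k\}$ genuinely has first term $\ge 2$, as Theorem~2 demands), which is forced by $n_1 \ge 2$. Alternatively, one could bypass Theorem~2 and argue directly — exactly as in its proof — that
$$
\frac{\ln(n_k)}{\ln(n_1 \cdot n_2 \cdot \ldots \cdot n_{k-1})} \le \frac{k\ln q}{\ln((k-1)!)} \to 0 \quad (k \to \infty)
$$
by Stirling's formula, and then invoke the criterion of \cite{AILT}; but the reduction to Theorem~2 is the cleaner route.
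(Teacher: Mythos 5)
Your proposal is correct and follows essentially the same route as the paper: it squeezes $\{n_k\}$ between the arithmetic progression $a_k = k+1$ (using $n_1 \ge 2$ and strict monotonicity) and the geometric progression $b_k = q^k$, then applies Theorem~2. Your extra remark that the subgeometric constant automatically satisfies $q \ge n_1 \ge 2$ is a small but welcome point of care that the paper's proof leaves implicit.
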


\begin{proof}
Since $n_1\ge2$ and $\{n_k \}$ is increasing, we have $n_k\ge k+1$. Therefore,
$$
a_k \le n_k \le b_k  , \quad  \forall k \in N  ,
$$
where $a_k =k+1$,  $b_k=q^k$.
 
The faithfulness of $\Phi$ follows from the previous theorem.
\end{proof}

\section{DP-transformations generated by Cantor series expansions}

Despite the fact that Cantor series expansions are natural generalizations of $s$-adic representations, the vast majority of problems that are completely solved for $s$-adic representations are still very far from being solved for Cantor series expansions. In particular, the problem of finding necessary and sufficient conditions for the distribution function $F_\xi$ to belong to the DP-class, i.e. to preserve the Hausdorff-Besicovitch dimension of an arbitrary subset on $[0;1]$. Important steps in the study of this problem have been made in the works of M. V. Pratsiovytyi, G. M. Torbin and their students.

Before presenting the new results of our study, let us recall the following definitions.

\begin{definition}
Let $\xi_k$ be a sequence of independent random variables taking values $0, 1, ... ,n_k -1$ with probabilities $ p_{0k}, p_{1k}, ... ,p_{(n_k-1)k}$ correspondently. The random variable 
$$
\xi=\sum_{k=1}^{\infty}   \frac{ \xi_{k}}{n_{1}\cdot n_{2}\cdot ...\cdot n_{k}}   =:  \Delta_{\xi_1\xi_2...\xi_k...}^{C}
$$
is said to be the random variable with independent symbols of Cantor expansion.
\end{definition}

\begin{definition}
A number 
$$
\dim_{H} {\mu_\psi} =\inf_{\mu_\psi(E) =1}  \{ \dim_H E\}
$$
is said to be the Hausdorff dimension of the measure $\mu_\psi$.
\end{definition}

\begin{definition}
 The spectrum of a random variable $\psi$ is the set
$$
S_\psi :=\{ x: F_\psi (x+\varepsilon ) - F_\psi (x-\varepsilon) >0, \forall \varepsilon >0 \}
$$
i.e. , $S_\psi$ is the minimal closed support of the measure $\mu_\psi$.
\end{definition}
Properties of Cantor expansions and properties of random variable $\xi$ were studied by M. Pratsiovytyi, G. Torbin, M. Lebid, B. Mance, R. Nikiforov and other authors.

A fundamentally important breakthrough in the development of the metric and dimensional theory of Cantor expansion was made in \cite{AILT}, where, in addition to the criterion for the faithfulness of the system of cylinders of the Cantor series expansions for the calculating of the Hausdorff-Besicovitch dimension on $[0;1]$, formulae for the calculating the Hausdorff-Besicovitch dimension of the spectrum of measure $\mu_\xi$  was derived. In the same paper authors proved the exact formulae for the calculation the Hausdorff dimension of the measure $\mu_\xi$ itself under the following restriction: 
$$
\sum_{k=1}^{\infty} \left( \frac{ \ln{n_k}} {\ln (n_{1}\cdot n_{2}\cdot...\cdot n_{k-1})}  \right) ^2 <+\infty,
$$
where $\{n_k\}$ is the basic sequence that determines the Cantor expansion.

The study of DP-properties of distribution functions $F_\xi$ at this moment is limited only to cases when the sequence $\{n_k\}$ is bounded  \cite{AILT}. In particular, the following fact has been proven.

\begin{theorem}
If $\{n_k\}$ is bounded and there exists a constant $p_0>0: p_{ik} \ge p_0$, then the probability distribution function of random variable $\xi$  with independent symbols of Cantor series expansion  is DP-transformation if and only if
$$
\dim_{H}{\mu_\xi}=1.
$$ 
\end{theorem}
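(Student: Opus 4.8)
The statement is an "if and only if": $F_\xi$ is a DP-transformation $\iff \dim_H\mu_\xi=1$. One direction is essentially free. If $F_\xi$ is a DP-transformation, then by definition it preserves Hausdorff dimension of every subset of $[0;1]$; applying this to the spectrum $S_\xi$ (on which $\mu_\xi$ is concentrated) and noting that $F_\xi$ maps $S_\xi$ onto a set of full Lebesgue measure — indeed $F_\xi$ is a non-decreasing surjection $[0;1]\to[0;1]$ and the $\mu_\xi$-image of $\mu_\xi$ under $F_\xi$ is Lebesgue measure restricted to $[0;1]$, so $F_\xi(S_\xi)$ has positive Lebesgue measure and hence Hausdorff dimension $1$. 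Dimension preservation then forces $\dim_H S_\xi=1$, and since $\dim_H\mu_\xi\le\dim_H S_\xi$ but also any set carrying $\mu_\xi$ must be "large", we actually need the finer statement $\dim_H\mu_\xi=1$. Here I would invoke the formula for $\dim_H\mu_\xi$ from \cite{AILT}: under boundedness of $\{n_k\}$ the faithfulness criterion (Theorem 1) is automatically satisfied (case b), so the family $\Phi(C)$ is faithful, and \cite{AILT} gives $\dim_H\mu_\xi$ as an explicit $\liminf$-type expression in terms of the entropies $h_k=-\sum_i p_{ik}\ln p_{ik}$ and $\ln n_k$. If $F_\xi$ preserves dimension then in particular the image measure (Lebesgue) has dimension $1$ and one pulls this back.

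For the substantive direction, assume $\dim_H\mu_\xi=1$ and show $F_\xi$ preserves the dimension of every $E\subset[0;1]$. The plan is the standard two-sided estimate. Since $\{n_k\}$ is bounded, the cylinder family $\Phi(C)$ is faithful (Theorem 1, case b), so it suffices to compare Hausdorff measures computed with respect to $\Phi(C)$-cylinders on the domain side and on the image side. A Cantor cylinder $\Delta^C_{c_1\dots c_k}$ has Lebesgue length $(n_1\cdots n_k)^{-1}$, while its $F_\xi$-image is an interval of length $\mu_\xi(\Delta^C_{c_1\dots c_k})=p_{c_1 1}\cdots p_{c_k k}$. The key point is to control the ratio of exponents
$$
\frac{\ln\big(p_{c_1 1}\cdots p_{c_k k}\big)}{\ln\big((n_1\cdots n_k)^{-1}\big)}
=\frac{\sum_{j=1}^k \ln p_{c_j j}}{\sum_{j=1}^k \ln n_j}.
$$
The hypothesis $\dim_H\mu_\xi=1$, via the \cite{AILT} formula, says precisely that this ratio tends to $1$ in an appropriate averaged/liminf sense; the extra assumption $p_{ik}\ge p_0>0$ gives the uniform bound $|\ln p_{c_j j}|\le |\ln p_0|$, which prevents any single rank from distorting the ratio and lets one upgrade "$\dim_H\mu_\xi=1$" to a genuinely uniform statement: for every $\varepsilon>0$ there is $k_0$ so that for all $k\ge k_0$ and all admissible $(c_1,\dots,c_k)$,
$$
(n_1\cdots n_k)^{-(1+\varepsilon)}\le p_{c_1 1}\cdots p_{c_k k}\le (n_1\cdots n_k)^{-(1-\varepsilon)}.
$$
Given this, for any $E$ and any $\alpha$ a covering of $E$ by cylinders of rank $\ge k_0$ transforms into a covering of $F_\xi(E)$ whose $\alpha$-sum is comparable to the $\alpha(1\pm\varepsilon)$-sum of the original covering; letting $\varepsilon\to0$ yields $\dim_H(F_\xi(E),\Phi(C))=\dim_H(E,\Phi(C))$, and faithfulness on both sides converts this to $\dim_H F_\xi(E)=\dim_H E$. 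Since $F_\xi$ is a bijection of $[0;1]$ (boundedness and $p_{ik}>0$ guarantee strict monotonicity), this is exactly the DP-property.

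The main obstacle is the uniform two-sided exponent estimate displayed above: $\dim_H\mu_\xi=1$ only gives information about a $\liminf$ of Cesàro-type averages of $\ln p_{c_j j}/\ln n_j$ along $\mu_\xi$-typical sequences, and one must promote this to a bound valid for \emph{all} cylinders and \emph{all} large ranks. This is where $p_{ik}\ge p_0$ and boundedness of $\{n_k\}$ do the real work: boundedness gives $\ln n_k\in[\ln 2,\ln N]$ so partial sums $\sum_{j\le k}\ln n_j$ grow linearly, and $p_{ik}\ge p_0$ bounds each term $\ln p_{c_j j}$ in $[\ln p_0, 0]$, so a Borel–Cantelli / large-deviations argument (or directly the characterization of $\dim_H\mu_\xi$ in \cite{AILT}) shows the "bad" cylinders where the ratio deviates from $1$ have exponentially small total $\mu_\xi$-mass and can be discarded from any covering at negligible cost in the dimension computation. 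I would carry out precisely this discarding argument, treating the good and bad cylinders in an $\varepsilon$-covering separately, which is the technical heart of the proof; everything else (strict monotonicity of $F_\xi$, reduction to $\Phi(C)$ via faithfulness, the easy converse) is routine.
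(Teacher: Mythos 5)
You should first note that the paper itself contains no proof of this theorem: it is quoted as a known result from \cite{AILT} (``the following fact has been proven''), so there is no in-paper argument to compare yours against; I can only assess your sketch against the standard route used in that literature, which it essentially follows (two-sided estimates on cylinder measures plus faithfulness of the net families on both sides). Your central claim in the hard direction is correct and provable: with $n_k\le N$ and $p_{ik}\ge p_0$, the AILT formula gives $\dim_H\mu_\xi=\liminf_k \frac{h_1+\dots+h_k}{\ln(n_1\cdots n_k)}$, and since $h_j\le\ln n_j$ and $\sum_{j\le k}\ln n_j\asymp k$, the hypothesis $\dim_H\mu_\xi=1$ forces $\sum_{j\le k}(\ln n_j-h_j)=o(k)$; as $\ln n_j-h_j$ is the Kullback--Leibler divergence of $p_{\cdot j}$ from the uniform law, Pinsker's inequality together with $p_{ij}\ge p_0$ and $1/n_j\ge 1/N$ yields $|\ln p_{c_jj}-\ln(1/n_j)|\le C\sqrt{\ln n_j-h_j}$ for \emph{every} digit $c_j$, and Cauchy--Schwarz then gives your uniform bound $(n_1\cdots n_k)^{-(1+\varepsilon)}\le p_{c_11}\cdots p_{c_kk}\le(n_1\cdots n_k)^{-(1-\varepsilon)}$ for all cylinders of large rank. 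So no Borel--Cantelli/discarding of ``bad'' cylinders is actually needed; the estimate is uniform, which is cleaner than the probabilistic route you hedge towards.

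Two soft spots you should repair. First, your easy direction via the spectrum proves nothing here: under $p_{ik}\ge p_0>0$ one has $S_\xi=[0;1]$, so $\dim_H S_\xi=1$ is trivial and unrelated to $\dim_H\mu_\xi$. The correct short argument is: if $F_\xi$ is a DP-transformation, take any Borel $E$ with $\mu_\xi(E)=1$; since $F_\xi$ pushes $\mu_\xi$ forward to Lebesgue measure, $\lambda(F_\xi(E))=1$, hence $\dim_H F_\xi(E)=1$, and the DP-property gives $\dim_H E=1$; taking the infimum yields $\dim_H\mu_\xi=1$. Second, in the final step you write $\dim_H(F_\xi(E),\Phi(C))$, but the images of Cantor cylinders are not Cantor cylinders: the covering family on the image side is $\Phi'=F_\xi(\Phi(C))$, a $Q^*$/$\tilde Q$-type net of intervals, and what you need is faithfulness of \emph{that} family; it does hold under the present hypotheses (bounded number of children, relative lengths bounded below by $p_0$, as in the cited results \cite{AT2, IT 1, VMT}), but it must be invoked explicitly rather than subsumed under ``faithfulness on both sides'' of $\Phi(C)$. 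With these two corrections your outline is a complete and essentially standard proof.
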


\section{Counterexample related to DP-transformations generated by arithmetic Cantor series expansions}

If the sequence $\{n_k\}$ is unbounded (for example, when $\{n_k\}$ forms an arithmetic progression), the condition of separation of $p_{ik}$ from zero is impossible to fulfill since $ \min\limits_{i} p_{ik} \le \frac{1}{n_k} $ and if $\{n_k\}$ is unbounded, the sequence $\{\frac{1}{n_k}\}$ has a subsequence tending to 0. Therefore, the previous theorem cannot be applied to the class of unbounded sequences $\{n_k\}$.

At the same time, we note that the general necessary conditions for $F_\xi$ to belong to the DP-class are the following ones:

$1) p_{ik} >0, \quad \forall i \in \{0, ..., n_k -1\};$

$2) \dim_{H}{\mu_\xi}=1.$

Let us construct a counterexample that demonstrates that for arithmetic Cantor expansions, even the simultaneous fulfillment of the above two conditions is not sufficient for $F_\xi$ to belong to the DP-class.

\textbf{Example 1.}Let $n_k =k+1$ and random variable $\xi$:
$$
\xi = \sum_{k=1}^{\infty} \frac {\xi_k}{(k+1)!}  , 
$$
where  \quad 

\begin{tabular}{|c|c|c|c|c|}
\hline
{$\xi_k$} & 0 & 1 & ... & $k$ \\
\hline
 & $\frac {1}{k+1}$ & $\frac {1}{k+1}$ & ...& $\frac {1}{k+1}$  \\
\hline
\end{tabular}
\quad $\forall k\in A:=\{n:n\ne 10^s , s\in N\}$
\\
\\

\begin{tabular}{|c|c|c|c|c|}
\hline
{$\xi_k$} & 0 & 1 & ... & $k$ \\
\hline
 & $ \frac {1}{10^{10^{10^{k}}}} $ & $ \frac {1-\frac {1}{10^{10^{10^{k}}}}}{k} $ & ...& $ \frac {1-\frac {1}{10^{10^{10^{k}}}}}{k}$ \\
\hline
\end{tabular}
 \quad $\forall k\in \bar{A}=B$
\\
\\
Lets us check whether  $\dim_{H}{\mu_\xi}=1$ .

Since
$$
\sum_{k=1}^{\infty} \left( \frac{\ln{n_k}}{\ln (n_{1}\cdot n_{2}\cdot...\cdot n_{k-1})} \right)^2 =\sum_{k=1}^{\infty} \left( \frac {\ln({k+1})} {\ln({k!})} \right)^2 <+\infty ,
$$
we can calculate the Hausdorff dimension of the measure by the following formula \cite{AILT}:
$$
\dim_{H}\mu_{\xi}=\varliminf_{k\to\infty} \frac {h_{1}+h_{2}+...+h_{k}} {\ln ( n_{1}\cdot n_{2}\cdot...\cdot n_{k} ) }, 
$$
where $h_k$ is the entropy of the random variable $\xi_k$, i.e. $h_k=-\sum\limits_{i=0}^{n_k-1} p_{ik}\ln{p_{ik}}$

If $ k \in A$, then $h_k =\ln{k+1}$.

If $ k\in \bar{A}=B$, then $h_k= -\left(\frac{1}{10^{10^{10^{k}}}} \ln{\frac{1}{10^{10^{10^{k}}}}}+k\cdot \frac{1-\frac{1}{10^{10^{10^{k}}}}} {k} \ln{\frac{1-\frac{1}{10^{10^{10^{k}}}}} {k}} \right) \sim\ln k$

because $\lim\limits_{x\rightarrow 0+} x\ln{x}=0$.

Therefore, 
$$
\dim_{H}\mu_{\xi}=\varliminf_{k\to\infty} \frac {h_{1}+h_{2}+...+h_{k}} {\ln ( n_{1}\cdot n_{2}\cdot...\cdot n_{k} ) }=
$$
$$
=\varliminf_{k\to\infty} \frac {\ln{(2\cdot3\cdot4\cdot...\cdot9\cdot10\cdot10\cdot12\cdot...\cdot100\cdot100\cdot102\cdot...\cdot 10^{k}\cdot10^{k})}} {\ln{(2\cdot3\cdot4\cdot...\cdot9\cdot10\cdot11\cdot12\cdot...\cdot100\cdot101\cdot102\cdot...\cdot 10^{k}\cdot(10^{k}+1))}}=
$$
$$
=\varliminf_{k\to\infty} \frac {\ln{(2\cdot3\cdot4\cdot...\cdot10^{k}\cdot(10^{k}+1))}-\ln{(\frac{11
}{10}\cdot\frac{101}{100}\cdot...\cdot\frac{10^{k}+1}{10^{k}})}} {\ln{(2\cdot3\cdot4\cdot...\cdot 10^{k}\cdot(10^{k}+1))}}=
$$
$$
=\varliminf_{k\to\infty} 1- \frac{\ln{((1+\frac{1}{10})\cdot(1+\frac{1}{10^2})\cdot...\cdot(1+\frac{1}{10^k}))}}{\ln{(2\cdot3\cdot4\cdot...\cdot 10^{k}\cdot(10^{k}+1))}}= 1 ,
$$
because $ \prod\limits_{k=1}^{\infty} (1+\frac{1}{10^k})$ is convergent.

Hence, $\dim_{H} \mu_\xi =1$ .

Consider the set $V$:
$$
V= \{x:x=\Delta_{\alpha_1 ... \alpha_9 0\alpha_{11}... \alpha_{99}  0 \alpha_{101} ...}^{C}\}=
$$
$$
= \{x:x=\Delta_{\alpha_1 \alpha_2 ... \alpha_k ...}^{C} , \alpha_i \in \{0,1,...,i\} \quad i \in A, \quad \alpha_i =0 \quad i \in B \}
$$
Let us show that $\dim_{H} (V) \ne \dim_{H}(F_{\xi} (V) )$ .

2. Let's calculate $\dim_{H} (V)$ .

Consider the random variable $\psi = \Delta_{\psi_1 \psi_2 ... \psi_k ...}^{C}$ , where
\\

\begin{tabular}{|c|c|c|c|c|}
\hline
{$\xi_k$} & 0 & 1 & ... & $k$ \\
\hline
 & $\frac {1}{k+1}$ & $\frac {1}{k+1}$ & ...& $\frac {1}{k+1}$  \\
\hline
\end{tabular}
 \quad $ \forall k \in A :=\{n:n \neq 10^s , s \in N \}$
\\
\\

\begin{tabular}{|c|c|c|c|c|}
\hline
{$\xi_k$} & 0 & 1 & ... & $k$ \\
\hline
   & $1$ & $0$ & ...& $0$  \\
\hline
\end{tabular}
\quad $\forall k\in \bar{A}=B$
\\
\\

It is easy to see that the set $V$ is the spectrum of the random variable $\psi$.

If
$$
\sum_{k=1}^{\infty} \left( \frac {\ln{n_k}}{ \ln (n_{1}\cdot n_{2}\cdot...\cdot n_{k-1})} \right)^2  = \sum_{k=1}^{\infty} \left( \frac{\ln{(k+1)}} {\ln {(k!)}} \right)^2 <+\infty,
$$
then the Hausdorff-Besicovitch dimension of the spectrum can be calculated by the following formula \cite{AILT}:
$$
\dim_{H} S_\xi = \varliminf_{k\to\infty} \frac{\ln {(m_1\cdot m_2 \cdot...\cdot m_k )} }{\ln {(n_1\cdot n_2 \cdot...\cdot n_k )} },
$$
where $m_k$ is the number of non-zero probabilities among $p_{0k}, p_{1k} , ... , p_{(n_k-1)k}$ .
$$
\dim_{H} S_\psi = \varliminf_{k\to\infty} \frac{\ln{(m_1\cdot m_2 \cdot...\cdot m_k )}}{\ln{(n_1\cdot n_2 \cdot...\cdot n_k ) }}=
$$

$$
=\varliminf_{k\to\infty} \frac { \ln ( 2 \cdot 3 \cdot ... \cdot 9 \cdot 10 \cdot 1 \cdot 12 \cdot ...\cdot 100 \cdot 1 \cdot 102 \cdot ... \cdot 10^{k} \cdot 1 ) } { \ln ( n_1 \cdot n_2 \cdot ... \cdot n_{10^k} ) }=
$$

$$
=\varliminf_{k\to\infty} \frac {\ln{(2\cdot3\cdot...\cdot10^{k}\cdot(10^{k}+1))}-\ln{(11\cdot\ 101\cdot...\cdot(10^k +1))}} {\ln{(n_1\cdot n_2 \cdot...\cdot  n_{{10}^k} )} }=
$$
$$
=\varliminf_{k\to\infty} \left(1- \frac{\ln({11\cdot\ 101\cdot...\cdot(10^k +1)})}{\ln{(n_1\cdot n_2 \cdot...\cdot  n_{{10}^k} ) }}\right)
$$
Since
$$
\frac{\ln({11\cdot\ 101\cdot 1001\cdot...\cdot(10^k +1)})}{\ln{(n_1\cdot n_2 \cdot...\cdot  n_{{10}^k} ) }} \le \frac{\ln({10\cdot\ 100\cdot 1000\cdot...\cdot10^{k +1})}}{\ln{(2\cdot 3 \cdot 4 \cdot...\cdot (10^k +1 ) ) }} =
$$
$$
=\frac{\ln({10\cdot\ 10^2\cdot 10^3\cdot...\cdot10^{k +1})}}{\ln{( (10^k +1 )! ) }} =
$$

$$
=\frac{\frac{k(k+1)}{2} \ln 10}{\ln ( \sqrt{2\pi(10^k +1)})+(10^k +1) \ln ({\frac{10^k +1}{e}})+\theta_k} \to 0 (k \to  \infty ),
$$

because $0<\theta_k<\frac{1}{12k}$.

Therefore, 
$$
=\varliminf_{k\to\infty} \left({1- \frac{\ln({11\cdot\ 101\cdot...\cdot(10^k +1)})}{\ln{(n_1\cdot n_2 \cdot...\cdot n_{{10}^k} )}}}\right) =1.
$$
$$
\dim_{H} ( V ) =\dim_{H} S_\psi =1
$$
3. Now let`s calculate $\dim_{H} ( F_\xi ( V) )$.

For any $x \in V$ consider the limit
$$
\lim_{k\to\infty} \frac{\ln{\lambda(\Delta_{\alpha_1 (x) \alpha_2 (x) ...\alpha_k (x)} )}}{\ln{\mu_\xi (\Delta_{\alpha_1 (x) \alpha_2 (x) ...\alpha_k (x)} )}} = \lim_{k\to\infty} \frac{-\ln{(n_1\cdot n_2 \cdot...\cdot n_{k} )}}{\ln( p_{\alpha_1 (x)1} \cdot p_{\alpha_2 (x)2}\cdot ...\cdot p_{\alpha_k (x)k})}
$$

Let us show that this limit exists for any $x\in V$ .

Let
$$
b_k (x):= \frac{\ln{\lambda(\Delta_{\alpha_1 (x) \alpha_2 (x) ...\alpha_k (x)} )}}{\ln{\mu_\xi (\Delta_{\alpha_1 (x) \alpha_2 (x) ...\alpha_k (x)} )}}=\frac{-\ln{(n_1\cdot n_2 \cdot...\cdot n_k ) }} {\ln{ (p_{\alpha_1 (x)1} \cdot p_{\alpha_2 (x)2}\cdot ...\cdot p_{\alpha_k (x)k})}}.
$$

Let`s consider $\{b_k (x)\}$:
$$
b_1 (x) = \frac {-\ln(n_1)}{\ln(p_{\alpha_{1} (x)1})} =1 , \quad  \forall x \in V
$$
$$
b_2 (x) = \frac {-\ln(n_1\cdot n_2)}{\ln(p_{\alpha_1 (x)1}\cdot p_{\alpha_2 (x)2})} =1 , \quad  \forall x \in V
$$
$$
\vdots
$$
$$
b_9 (x) = \frac {-\ln(n_1\cdot...\cdot n_9)}{\ln(p_{\alpha_1 (x)1}\cdot...\cdot p_{\alpha_9 (x)9})} =1 , \quad  \forall x \in V
$$
$$
b_{10} (x) = \frac {-\ln(n_1\cdot...\cdot n_9\cdot n_{10} )}{\ln(p_{\alpha_1 (x)1}\cdot...\cdot p_{\alpha_9 (x)9}\cdot p_ {\alpha_{10} (x)10})}=\frac{\ln(2\cdot...\cdot10\cdot11)}{\ln(2\cdot...\cdot10\cdot{{10}^{{{10}^{10}}}} )}<1 , \quad  \forall x \in V
$$
$$
b_{11} (x) = \frac {-\ln(n_1\cdot...\cdot n_{10}\cdot n_{11} )}{\ln(p_{\alpha_1 (x)1}\cdot...\cdot p_{\alpha_{10} (x)10}\cdot p_ {\alpha_{11} (x)11})}=\frac{\ln(2\cdot...\cdot11\cdot12)}{\ln(2\cdot...\cdot{{10}^{{{10}^{10}}}}\cdot12 )} <1 , \quad  \forall x \in V
$$

Hence

$$
b_1(x)=b_2(x)=...=b_9(x)>b_{10}(x)
$$

$$
b_{10}(x)<b_{11}(x)<...<b_{99}(x)>b_{100}(x)
$$

$$
b_{100}(x)<b_{101}(x)<...<b_{999}(x)>b_{1000}(x)
$$

$$
\vdots
$$

$$
b_{1000}(x)<b_{1001}(x)<...<b_{9999}(x)>b_{10000}(x)
$$

$$
\vdots
$$

$$
b_{10^k}(x)<b_{10^k +1}(x)<...<b_{10^{k+1} -1}(x)>b_{10^{k+1}}(x), \quad  \forall k \in N
$$

To show that $\lim\limits_{k\to\infty} b_k (x)$ exists and is equal to 0, we calculate:

$$
1) \varliminf_{k\to\infty} b_k (x)= \varliminf_{k\to\infty} b_{{10}^k} (x)= \varliminf_{k\to\infty} \frac{-\ln{(n_1\cdot n_2 \cdot...\cdot n_{{10}^k} ) }}{\ln{ \left(p_{\alpha_1 (x)1} \cdot p_{\alpha_2 (x)2}\cdot ...\cdot p_{\alpha_{{10}^k} (x){10}^k}\right)}}
$$
Since
$$
\ln \left( \frac{1}{p_{\alpha_1 (x)1}} \cdot \frac{1}{p_{\alpha_2 (x)2}}\cdot ...\cdot \frac{1}{p_{\alpha_{{10}^k} (x){10}^k}} \right)=
$$
$$
=\ln { 2\cdot3\cdot...\cdot10\cdot{10}^{{10}^{{{10}^{{10}^1}}}}}\cdot 12\cdot...\cdot 99 \cdot 100\cdot{10}^{{10}^{{{10}^{{10}^2}}}}\cdot...\cdot {10}^{{10}^{{{10}^{{10}^k}}}}>
$$
$$
> \ln{ {10}^{{10}^{{{10}^{{10}^k}}}} }={10}^{{{10}^{{10}^k}}} \ln {10}
$$
and
$$
\ln{(n_1\cdot n_2 \cdot...\cdot n_{{10}^k} )}=\ln{({10}^k+1)!}<\ln{({10}^k)!}=
$$

$$
=\ln{\left(\sqrt{2\pi{10}^k}\cdot {\left(\frac{{10}^k}{e}\right)}^{{10}^k} \cdot \theta_k\right)}=\ln\left(\sqrt{2\pi{10}^k}\right) + {10}^k \ln{ \left(\frac{{10}^k}{e}\right)} +\ln \theta_k , 
$$

where $0<\theta_k<\frac{1}{12k}$,

we have
$$
\frac{\ln{(n_1\cdot n_2 \cdot...\cdot n_{{10}^k} ) }}{\ln{ 
\left( \frac{1}{p_{\alpha_1 (x)1}} \cdot \frac{1}{p_{\alpha_2 (x)2}}\cdot ...\cdot \frac{1}{p_{\alpha_{{10}^k} (x){10}^k}}\right) }} < \frac{\ln\left(\sqrt{2\pi{10}^k}\right) + {10}^k \ln{ \left(\frac{{10}^k}{e}\right)} +\ln \theta_k} {{10}^{{{10}^{{10}^k}}} \ln {10}} \to 0 \ (k \to\infty)
$$
Therefore
$$
\varliminf_{k\to\infty} \frac{\ln{(n_1\cdot n_2 \cdot...\cdot n_{10^k} ) }}{\ln \left( \frac{1}{p_{\alpha_1 (x)1}} \cdot \frac{1}{p_{\alpha_2 (x)2}}\cdot ...\cdot \frac{1}{p_{\alpha_{10^k} (x) 10^k}}\right)} =0.
$$

$$
2) \varlimsup_{k\to\infty} b_k (x)=\varlimsup_{k\to\infty} b_{({{10}^k -1})} (x)= \varlimsup_{k\to\infty}\frac{-\ln{(n_1\cdot n_2 \cdot...\cdot n_{(10^k -1)} ) }}{\ln \left(p_{\alpha_1 (x)1} \cdot p_{\alpha_2 (x)2}\cdot ...\cdot p_{\alpha_{(10^k -1)} (x)(10^k -1)}\right)} 
$$
Since

$$
\ln \left( \frac{1}{p_{\alpha_1 (x)1}} \cdot \frac{1}{p_{\alpha_2 (x)2}}\cdot ...\cdot \frac{1}{p_{\alpha_{(10^k -1)} (x)(10^k -1)}} \right)=
$$

$$
=\ln ({2\cdot...\cdot10\cdot{10}^{{10}^{{{10}^{{10}^1}}}}}\cdot 12\cdot... \cdot {10}^2\cdot{10}^{{10}^{{{10}^{{10}^2}}}}\cdot...\cdot{10}^{k-1}\cdot {10}^{{10}^{{{10}^{{10}^k-1}}}}\cdot...\cdot{10}^k)>
$$
$$
> \ln{ {10}^{{10}^{{{10}^{{10}^k-1}}}} }={10}^{{{10}^{{10}^k-1}}} \ln {10}
$$

and
$$
\ln \left(n_1\cdot n_2 \cdot...\cdot n_{({10}^k -1)} \right) =\ln {({10}^k)}!=
$$

$$
=\ln \left(\sqrt{2\pi{10}^k}\cdot {\left(\frac{{10}^k}{e}\right)}^{{10}^k} \cdot \theta_k \right)=\ln\left(\sqrt{2\pi{10}^k}\right) + {10}^k \ln{ \left(\frac{{10}^k}{e}\right)} +\ln \theta_k  , 
$$
where $0<\theta_k<\frac{1}{12k}$,

we have
$$
\varlimsup_{k\to\infty}\frac{-\ln{\left(n_1\cdot n_2 \cdot...\cdot n_{(10^k -1)} \right) }}{\ln \left(p_{\alpha_1 (x)1} \cdot p_{\alpha_2 (x)2}\cdot ...\cdot p_{\alpha_{(10^k -1)} (x)(10^k -1)}\right)} \le \varlimsup_{k\to\infty}\frac{\ln\left(\sqrt{2\pi{10}^k}\right) + {10}^k \ln{ \left(\frac{{10}^k}{e}\right)} +\ln \theta_k}{{10}^{{{10}^{{10}^k-1}}} \ln {10}} =0
$$

If
$$
0=\varliminf_{k\to\infty} b_k (x)\le \varlimsup_{k\to\infty} b_k (x) \le 0 ,
$$
then
$$
\lim_{k\to\infty} b_k =0 .
$$
According to Billingsley's theorem\cite{Billingsley 1961}: 
$$
If  \quad V=\left\{x: \lim_{k\to\infty} \frac{\ln{\lambda(\Delta_{\alpha_1 (x) \alpha_2 (x) ...\alpha_k (x)} )}}{\ln{\mu_\xi (\Delta_{\alpha_1 (x) \alpha_2 (x) ...\alpha_k (x)}  )}} =\delta \right\} ,  then  \quad \dim{H}(V,\mu_\xi,\Phi)=\delta\dim_{H}(V,\lambda,\Phi).
$$ 

1. \vspace{1cm} $ \dim_{H}(V,\lambda, \Phi )=\inf\{\alpha:H^\alpha (V,\lambda,\Phi(C))=0\} $ ,

where $\Phi(C)$  -- family of cylinders of the Cantor expansion.
$$
H_{\varepsilon}^{\alpha} ((V,\lambda,\Phi(C))=\inf_{|E_j|\le \varepsilon} \sum_{j} {\lambda(E_j)}^\alpha , \quad E_j \in \Phi(C))
$$
It is not difficult to prove that, if the sequence $\{n_k\}$  forms an arithmetic progression, then $\Phi(C)$ is a faithful family for computing the Hausdorff-Besicovitch dimension, i.e.
$$
\dim_{H}(E)=\dim_{H} (E, \Phi(C)), \quad \forall E \subset [0;1] .
$$
Therefore
$$
\dim_{H}(V,\lambda,\Phi(C))=\dim_{H}(V,\lambda)=\dim_{H}(V)
$$

2. \vspace{1cm} $ \dim{H}(V,\mu_\xi, \Phi )=\inf\{\alpha:H^\alpha (V,\mu_\xi,\Phi(C))=0\}$

$$
H_{\varepsilon}^{\alpha} (V,\mu_\xi,\Phi(C))=\inf_{|V_j|\le \varepsilon} \sum_{j} {\,\mu_\xi(V_j)}^\alpha , \quad V_j \in \Phi(C))
$$

It is easy to see that

$$
\mu_\xi (\Delta_{\alpha_1 (x) \alpha_2 (x) ... \alpha_k (x)} ) = p_{\alpha_1 (x)1} \cdot p_{\alpha_2 (x)2} \cdot ... \cdot p_{ \alpha_k (x)k} = $$ 

$$ = \left| \Delta_{\alpha_1 (x) \alpha_2 (x) ... \alpha_k (x)}^{\tilde{P}} \right| =
 \left| { F_\xi \left( \Delta_{\alpha_1 (x) \alpha_2 (x) ... \alpha_k (x)}^C \right) } \right| .
$$

Since $V$ was covered by $\{V_j \}$, we conclude that $F_\xi (V)$ can be covered by $\{F_\xi (V_j)\}$. 

Therefore
$$
\dim_{H}(V,\mu_\xi,\Phi(C))=\dim_{H}(F_\xi (V),\lambda,F_\xi(\Phi(C)))= \dim_{H} (F_\xi(V),\Phi'),
$$
where  $\Phi'=F_\xi (\Phi(C))\ $ .

According to the article by V. Vasylenko, V. Misky, G. Torbin \cite{VMT} at $n_k\le n_0$ and $p_{ik}\ge p_0 > 0 : \Phi'=\Phi' (\tilde{Q})$ -- is faithful.

Hence, $\dim_{H} (V,\mu_\xi )=\dim_{H} (F_\xi (V) )$ .

So, 
$$
\dim_{H} (F_\xi (V) )=0\cdot \dim_{H} (V)
$$
$$
\dim_{H} (F_\xi (V) )=0 
$$.

That is $\dim_{H} (V)=1\ne 0=\dim_{H} (V)$.

This means that $F_\xi$ is not a DP-transformation.
\\
\textbf{Acknowledgment} This work was partly supported by a grant from the Simons Foundation (1290607, Torbin G.) and by the Ministry of Science and Education of Ukraine (projects "Multifractal analysis of probability distributions and its application for modeling of complex dynamical systems" no.0122U000048  and "Dynamics of complex systems on different time scales"  no.0120U101662 .

\end{document}